\newtheorem{lemma}{Lemma}
\newtheorem{proposition}{Proposition}
\newtheorem{definition}{Definition}
\newtheorem{remark}{Remark}
\newcommand{\interior}{\operatorname{int}}
\newenvironment{proof}{\noindent {\em Proof .} \hspace{2 mm}}{\vspace{2mm} \vrule height4pt width3pt depth2pt}
\begin{document}

\title{About Transgressive Over-Yielding in the Chemostat} 
\author{D. Dochain$^{1}$, P. de Leenheer$^{2}$ and
  A. Rapaport$^{3}$\footnote{corresponding author}\\
$^{1}$ CESAME, U.C. Louvain, Belgium (e-mail:
Denis.Dochain@uclouvain.be)\\
$^{2}$ Math. Dept., Univ. Florida, USA (e-mail: deleenhe@ufl.edu)\\
$^{3}$ UMR INRA/SupAgro 'MISTEA' and INRA-INRIA associated team
'MODEMIC',\\ Montpellier, France (e-mail: rapaport@supagro.inra.fr)
}
\date{June 20, 2012}

\maketitle

\begin{abstract}
We show that for certain configurations of two chemostats fed in
parallel, the presence of two different species in each tank can
improve the yield of the whole process, compared to the same
configuration having the same species in each volume. This leads to a
(so-called) ``transgressive over-yielding'' due to
spatialization.\\
{\bf Key-words.} Chemostat, steady-state, optimization, over-yielding.
\end{abstract}

\section{Introduction}

The study of biodiversity and environment on the functioning of
ecosystems is one of the most important issues in ecology nowadays (see for instance \cite{RB09}). 

Recent investigations on the functioning of microbial ecosystems has revealed that 
combinations of certain species and resources enhance the functioning of the overall ecosystem, leading to  a ``transgressive over-yielding'' (\cite{SHSL08,LBSJ10}). In those experiments, the environment is modified by altering the composition of the resources and its richness (\cite{TRKKB08}).
In the literature in ecology, the consideration of spatial heterogeneity in the environment (niches, islands...) is often promoted as a leading factor for the explanation and prediction of abundance and distribution of species and resources in ecosystems. Comparatively, the influence of the spatialization on the performances of the functioning of ecosystems has been much more rarely studied (\cite{ERH06}).

From the view point of the industry of biotechnology and waste water treatment, the spatial decoupling of bio-transformation is known to impact significantly the yield conversion, and engineers are looking for design of bio-processes that offer the best performances (see \cite{LT82,HR89,GBBT96,HD05}).
For instance, a series of continuous stirred bioreactors is known to be potentially more efficient than a single one (for precise distributions of the volumes) in \cite{HRT03,HRD04,NS06}.
Recent investigations on the chemostat model have revealed that other simple spatial motifs, such as parallel interconnections of tanks, can also present interesting benefits (\cite{HRG11}).
In this framework, most of the studies are conducted for a single species. Only a few of them investigate the role of biodiversity and spatial heterogeneity on the performances of the ecosystem (\cite{SF79,RHM08}).

In the present work, we show that spatialization can be another factor responsible for a transgressive over-yielding. For this purpose, we consider the usual framework of the chemostat and show that splitting one volume into two tanks in parallel with different dilution rates can enhance the performances at steady state when two different species are present in each vessel, compared to the situations of having the same species present in both volumes. Here, optimal performance is achieved when the value of the nutrient at steady state in the outflow of the system is minimized. This notion is inspired by the scenario of 
waste water treatment processes where the input nutrient to the reactor is the pollutant, and the purpose of the treatment plant is to minimize the pollutant concentration.

We first study the case of a single species and show that two
reactors in parallel instead of one does not bring any benefit, although bypassing one tank may improve the performance in some cases. Then, we give sufficient conditions for the growth curves of two species  that  
guarantee the existence of two tanks configurations that offer a transgressive over-yielding.  

\section{One species}
\subsection{One reactor}
Let a chemostat be operated with constant volumetric  flow rate $Q$, constant volume $V$, and constant input nutrient concentration $S^{in}$. 
We call $D=Q/V$ the dilution rate. In case only 1 species is present, the equations for the nutrient concentration $S(t)$ and the 
species concentration $x(t)$, are:
\begin{eqnarray}
{\dot S}&=&D(S^{in}-S)-\frac{f(S)}{\gamma}x\label{c1}\\
{\dot x}&=&x(f(S)-D)\label{c2}
\end{eqnarray}
where $\gamma\in (0,1]$ is the yield of the conversion of nutrient into species biomass, and $f(S)$ is the per capita growth rate, a smooth, increasing 
function (with $f'(S)>0$ for all $S\geq 0$) with $f(0)=0$. We also assume that $f$ is concave, i.e. $f''(S)\leq 0$ for all $S\geq 0$. Finally, we 
let $D^*=f(S^{in})$.

A typical example satisfying all these conditions would be the Monod function $f(S)=mS/(a+S)$ for positive $a$ and $m$.

The first statement of the following result is well-known (see for instance the textbook \cite{SW95}).
\begin{proposition}\label{f-inverse} Let $x(0)>0$. Every solution $(S(t),x(t))$ of $(\ref{c1})-(\ref{c2})$ satisfies:
$$ 
\lim_{t\rightarrow +\infty}S(t)= \lambda(D),
$$ 
where
\begin{equation}\label{lambda}
\lambda(D)=
f^{-1}(D), \textrm{ if } D\leq D^*,\textrm{ and }
=S^{in}, \textrm{ if } D> D^*
\end{equation}
The (continuous) function $\lambda (D)$ is increasing on $[0,D^*)$ (since $\lambda' > 0$ there) and nondecreasing on $[0,+\infty)$. 
$\lambda(D)$ is convex if $D\in [0,D^*)$ (since $\lambda'' \geq 0$ there), but not if $D\in [D^*,\infty)$.
\end{proposition}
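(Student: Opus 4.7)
My plan splits the proposition into two independent parts: the asymptotic convergence, and the regularity/convexity properties of $\lambda$.

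For convergence, I would introduce the total mass $M(t)=S(t)+x(t)/\gamma$. A direct computation using (\ref{c1})--(\ref{c2}) shows that the cross term $f(S)x/\gamma$ cancels, leaving the linear equation $\dot M=D(S^{in}-M)$, so $M(t)\to S^{in}$ exponentially. Using this to reduce to the limit manifold $x=\gamma(S^{in}-S)$ (justified by an asymptotically autonomous / Thieme-style argument), the $S$-equation becomes the scalar ODE $\dot S=(S^{in}-S)(D-f(S))$. Since $f$ is strictly increasing with $f(0)=0$, I split into two cases: if $D\le D^*=f(S^{in})$, the unique relevant equilibrium in $[0,S^{in}]$ is $S=f^{-1}(D)$, which is globally attracting on $(0,S^{in}]$ by a sign analysis of the right-hand side; if $D>D^*$, then $D>f(S)$ throughout $[0,S^{in}]$, so $\dot S>0$ on $[0,S^{in})$ and $S(t)\to S^{in}$. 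The positivity assumption $x(0)>0$ is used only to keep $x(t)>0$ for all time and thus to rule out the washout state when $D\le D^*$.

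For the regularity of $\lambda$, I would work on the two pieces separately. On $[0,D^*)$, $\lambda=f^{-1}$; since $f'>0$, the inverse function theorem gives $\lambda'(D)=1/f'(\lambda(D))>0$, proving monotonicity. Differentiating once more yields
\[
\lambda''(D)=-\frac{f''(\lambda(D))}{[f'(\lambda(D))]^{3}},
\]
which is $\ge 0$ because $f''\le 0$ and $f'>0$, giving convexity on $[0,D^*)$. On $[D^*,\infty)$, $\lambda\equiv S^{in}$ is trivially nondecreasing with $\lambda''=0$.

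Continuity at $D^*$ follows from $f^{-1}(D^*)=S^{in}$, so $\lambda$ is globally continuous and nondecreasing on $[0,\infty)$. Finally, to justify the claim that $\lambda$ fails to be convex on $[D^*,\infty)$, I would exhibit a chord issue at $D^*$: the left derivative at $D^*$ is $\lambda'(D^{*-})=1/f'(S^{in})>0$, whereas the right derivative is $0$, so $\lambda'$ strictly decreases across $D^*$, contradicting convexity on any interval containing $D^*$ in its interior. The only subtle step in the whole argument is the rigorous passage from the conserved-quantity limit $M\to S^{in}$ to the reduced scalar dynamics; the rest is routine calculus and sign analysis.
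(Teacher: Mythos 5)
Your proposal is correct in substance but distributes its effort quite differently from the paper. The paper does not prove the convergence statement at all: it declares it well-known and defers to the Smith--Waltman textbook, whereas you sketch the standard conservation-law proof via $M=S+x/\gamma$, $\dot M=D(S^{in}-M)$, and reduction to the limiting scalar equation $\dot S=(S^{in}-S)(D-f(S))$. For the properties of $\lambda$, your inverse-function-theorem computation $\lambda'=1/f'(\lambda)$ and $\lambda''=-f''(\lambda)/\left(f'(\lambda)\right)^{3}$ is exactly the paper's implicit differentiation of $f^{-1}(f(S))=S$, just packaged differently. Where you genuinely diverge is the failure of convexity beyond $D^*$: the paper exhibits an explicitly violated chord, writing $D^*=p\cdot 0+(1-p)\bar D$ for a suitable $\bar D>D^*$ and $p\in(0,1)$ and checking $\lambda(D^*)=S^{in}>(1-p)S^{in}=p\lambda(0)+(1-p)\lambda(\bar D)$, while you argue via one-sided derivatives, noting that $\lambda'(D^{*-})=1/f'(S^{in})>0$ drops to $\lambda'(D^{*+})=0$, which no convex function permits. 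Both are valid; the chord argument is more elementary (it needs no derivative information at the kink), while yours localizes the failure precisely at $D^*$ and makes transparent that $\lambda$ restricted to $[D^*,\infty)$ alone is still trivially convex, so the nonconvexity is really about intervals straddling $D^*$.

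One caution on the part the paper outsources to the literature: in your convergence sketch, the assertion that $x(0)>0$ rules out washout because $x(t)$ stays positive is not by itself sufficient --- positivity of $x(t)$ for every finite $t$ does not preclude $x(t)\to 0$. When $D<D^*$ the reduced scalar equation has two equilibria in $[0,S^{in}]$, namely $f^{-1}(D)$ and the washout value $S^{in}$, and one must additionally argue that solutions starting with $x(0)>0$ do not converge to the latter, e.g.\ by observing that the washout equilibrium of the planar system is a hyperbolic saddle whose stable manifold lies in the invariant face $\{x=0\}$, or by a uniform persistence argument, before the asymptotically autonomous (Thieme-type) reduction delivers the stated limit. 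Since the paper proves none of this and simply cites the textbook, the omission does not put you at odds with the paper's own proof, but it is the one step in your outline that is not yet airtight.
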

\begin{proof}
That $\lambda(D)$ is continuous, is obvious from the definition, and that it is increasing on $[0,D^*)$ follows from the fact that 
it is the inverse function of the increasing function $f(S)$, $S\in [0,S^{in})$. 
To show the convexity of $\lambda(D)$ for $D\in [0,D^*)$ we implicitly differentiate 
$f^{-1}(f(S))=S$, $S\in [0,S^{in}]$, twice:
$$
(f^{-1})'(f(S))f'(S)=1
$$
$$\Rightarrow (f^{-1})''(f(S))(f'(S))^2 + (f^{-1})'(f(S))f''(S)=0,
$$
and thus that
$$
(f^{-1})''=-\frac{(f^{-1})'f''}{(f')^2}
$$
Since $f'>0$ and hence also $(f^{-1})'>0$, and since $f''\leq 0$, it follows that $(f^{-1})''\geq 0$, hence $f^{-1}$ is convex, and therefore 
so is $\lambda(D)$ as long as $D\in [0,D^*)$.
Nonconvexity of $\lambda (D)$ when $D\in[0,+\infty)$ follows by picking an arbitrary ${\bar D}>D^*$, and letting 
$D^*=p 0+ (1-p){\bar D}$ for an appropriate $p \in (0,1)$. It follows that:
$$
\lambda(p 0 +(1-p){\bar D})=\lambda(D^*) =S^{in}$$
$$> (1-p)S^{in}=(1-p){\bar D}=p\lambda(0)+(1-p)\lambda ({\bar D}),
$$
and thus $\lambda(D)$ is not convex if $D\in[0,+\infty)$.
\end{proof}
\subsection{Two reactors}
Assume that we split the inflow channel to the chemostat in two channels, and the reactor volume in two parts. The first reactor 
is fed by the first channel with volumetric flow rate $\alpha Q$ for some $\alpha \in [0,1]$ and it has volume $rV$ for some $r\in [0,1]$. 
The second reactor is fed by the second channel with volumetric flow rate $(1-\alpha) Q$, and it has volume $(1-r)V$. 
Note that the total reactor volume of both reactors equals the initial
single reactor volume. Note also that the initial inflow channel has
simply been split into two channels. Of course, we assume that for
both species into bioreactors optimal environmental conditions (temperature, pH etc.) are applied.

To determine the asymptotic value of the nutrient concentration 
in each of the two reactors, we use the Proposition from the previous section. This value is completely determined by the dilution rate 
that each reactor experiences. For the first reactor, this dilution rate is 
$$
\frac{\alpha Q}{rV}=\frac{\alpha}{r}D,
$$
whereas it is 
$$
\frac{(1-\alpha)Q}{(1-r)V}=\frac{1-\alpha}{1-r}D
$$
for the second reactor.
Therefore, the asymptotic nutrient values are
$$
\lambda\left(\frac{\alpha }{r}D \right)\textrm{ and } \lambda\left(\frac{1-\alpha}{1-r}D \right),
$$
in the first and second reactor respectively. Assume that we mix the outflows of both reactors (in the same proportions $\alpha$ and $1-\alpha$ as the volumetric flow rates at their respective input channels). The asymptotic value of the nutrient in the mixture is then given by the function
\begin{equation}\label{yield}
F(\alpha ,r):=\alpha\lambda\left(\frac{\alpha }{r}D \right) + (1-\alpha)\lambda\left(\frac{1-\alpha}{1-r}D \right),
\end{equation}
where $(\alpha,r)\in(0,1)^2$. Clearly, this function is smooth in $(0,1)^2$. We would like to extend it to $[0,1]^2$. First we extend it continuously to $(\alpha,r)\neq (0,0)$ and $\neq (1,1)$ by defining
\begin{eqnarray*}
F(\alpha,0)&:=&\alpha S^{in}+(1-\alpha)\lambda \left((1-\alpha)D\right)\textrm{ for } \alpha>0,\\
F(\alpha,1)&:=&\alpha\lambda\left(\alpha D \right)+(1-\alpha)S^{in}\textrm{ for }\alpha<1
\end{eqnarray*}
Finally, we continuously extend it to $[0,1]^2$ by defining:
$$
F(0,0)=F(1,1):=\lambda(D)
$$

An important property of $F$ is that:
$$
F(\alpha, r)=\lambda(D),\textrm{ whenever } \alpha=r.
$$
This is not surprising since all configurations with $\alpha=r$ result in the same dilution rate $D$ for each of the two reactors, which is the same as the dilution rate of the initial single reactor. The asymptotic nutrient concentrations in each reactor will be the same, and equal 
to $\lambda(D)$, which equals the asymptotic nutrient concentration of the single reactor. The mixing of these concentrations with proportions  $\alpha$ and $1-\alpha$, yields $\lambda(D)$. In other words, for these configurations, the splitting of the reactor is merely artificial, and the 
asymptotic outcome is the same as if the reactor would not have been split at all.

An obvious question is to determine those configurations $(\alpha ,r)$ where $F(\alpha,r)$ is minimal, and also to see if these minima  
are lower than the corresponding value $\lambda (D)$ which is obtained in case of a single reactor (or in case of any configuration 
satisfying $\alpha=r$ in view of the remark above)

We first define a convex polygonal region in the unit square $[0,1]^2$:
$$
\begin{array}{lll}
C & = & \{(\alpha,r)\in [0,1]^2\;|\; (\alpha/r)D\leq D^*\\
& & \textrm{ and }\left((1-\alpha)/(1-r)\right)D\leq D^* \}
\end{array}
$$
and we note that $C$ is a nontrivial set (i.e. has nonzero Lebesgue measure) if and only if 
$$
D<D^*,
$$
an assumption we make in the remainder of this paper. This condition on $D$ simply expresses that in case of a single reactor, the 
asymptotic nutrient concentration $\lambda(D)$ will be strictly less than the maximal possible value $S^{in}$.
Next, we also define two triangular regions in $[0,1]^2$:
\begin{eqnarray*}
T_1&=&\{ (\alpha,r)\in [0,1]^2\;|\; (\alpha/r)D> D^*\}\\
T_2&=&\{(\alpha,r)\in [0,1]^2\;|\; \left((1-\alpha)/(1-r)\right)D> D^*) \}
\end{eqnarray*}
and we note that $C$, $T_1$ and $T_2$ are pairwise disjoint, and that they cover $[0,1]^2$.

The following Lemma is usefull for the following.
\begin{lemma}
The function 
\begin{equation}
\delta: (\alpha,r)\in [0,1]\times(0,1] \mapsto \frac{\alpha^{2}}{r}
\end{equation}
is convex on its domain.
\end{lemma}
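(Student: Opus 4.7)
The plan is to establish convexity of $\delta(\alpha,r) = \alpha^2/r$ on $[0,1]\times(0,1]$ by computing its Hessian and verifying that it is positive semi-definite at every point of the (open in the $r$-direction) domain. Since $\delta$ is $C^\infty$ on this domain, convexity is equivalent to the Hessian being PSD everywhere.

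First I would compute the four second-order partial derivatives, which are elementary: $\delta_{\alpha\alpha} = 2/r$, $\delta_{rr} = 2\alpha^2/r^3$, and $\delta_{\alpha r} = \delta_{r\alpha} = -2\alpha/r^2$. This gives the Hessian
\[
H(\alpha,r) = \begin{pmatrix} 2/r & -2\alpha/r^2 \\ -2\alpha/r^2 & 2\alpha^2/r^3 \end{pmatrix}.
\]
Next I would check the two standard criteria for PSD of a symmetric $2\times 2$ matrix: nonnegative diagonal entries and nonnegative determinant. Both diagonal entries are nonnegative since $r>0$. The determinant is
\[
\frac{2}{r}\cdot\frac{2\alpha^2}{r^3} - \left(\frac{2\alpha}{r^2}\right)^2 = \frac{4\alpha^2}{r^4} - \frac{4\alpha^2}{r^4} = 0.
\]
Hence $H$ has eigenvalues $0$ and $\mathrm{tr}(H) = 2/r + 2\alpha^2/r^3 \geq 0$, so $H \succeq 0$ throughout the domain.

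There is no real obstacle here beyond bookkeeping: the only delicate point is making sure convexity is asserted on the correct set, namely $[0,1]\times(0,1]$, which avoids the singularity at $r=0$. (As a sanity check, the formula $\alpha^2/r$ is the standard perspective of the convex function $\alpha\mapsto \alpha^2$, which gives an alternative one-line proof; but the direct Hessian computation is transparent enough to include in the paper.)
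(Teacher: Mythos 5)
Your proof is correct and follows essentially the same route as the paper: computing the second-order partials $\delta_{\alpha\alpha}=2/r$, $\delta_{rr}=2\alpha^{2}/r^{3}$, $\delta_{\alpha r}=-2\alpha/r^{2}$, noting the determinant of the Hessian vanishes and the diagonal entries are nonnegative, and concluding positive semi-definiteness. The remark about $\alpha^{2}/r$ being the perspective of $\alpha\mapsto\alpha^{2}$ is a nice additional observation, but the core argument matches the paper's.
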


\begin{proof} One has
$$
\delta_{\alpha} = \frac{2\alpha}{r} \ , \;
\delta_{r} = -\frac{\alpha^{2}}{r^{2}} \ ,
$$
and furthermore
$$
\delta_{\alpha\alpha} = \frac{2}{r} \ , \; 
\delta_{rr} = 2\frac{\alpha^{2}}{r^{3}} \ , \;
\delta_{\alpha r} = - \frac{2\alpha}{r^{2}} \ .
$$
It follows that
$$
\delta_{\alpha\alpha}>0 \ ,\; \delta_{rr}>0 \ , \;
\delta_{\alpha\alpha}\delta_{rr}-\delta_{\alpha r}^{2}=0 \ .
$$
Thus the Hessian of $\delta$ is positive semi-definite.
\end{proof}

We introduce the following function
\begin{equation}
\label{threshold-Sin}
T^{in}(D)=\lambda(D)+D\lambda^{\prime}(D)
\end{equation}
that plays an important role in the following.

The following Proposition shows that the performance of a single species chemostat can only be improved if $S^{in} $ is
small enough, and this is achieved by bypassing the second reactor with a unique, specific flow rate.
\begin{proposition}
The restriction of $F$ to the convex set $C$, $F|_C$, is convex. 
Moreover there holds that
$$ 
\min_{[0,1]^2}F=\left|\begin{array}{l}
\lambda (D)=F(\alpha,\alpha),\; \forall \alpha \in [0,1], \mbox{ for } S_{in}\geq T^{in}(D)\\
\min_{\alpha}F(\alpha,0)<\lambda(D), \mbox{ for } S_{in}< T^{in}(D)
\end{array}\right.
$$
\end{proposition}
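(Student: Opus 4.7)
The plan is three-fold: (i) prove $F|_C$ is convex, (ii) show the diagonal $\{\alpha=r\}\subset C$ is a curve of global minima of $F|_C$ all equal to $\lambda(D)$, and (iii) analyze the remaining regions $T_1,T_2$ by reducing to their edges $r=0$ and $r=1$. For (i), on $C$ one has $\lambda=f^{-1}$, convex by Proposition~\ref{f-inverse}. To deploy the Lemma, I rewrite $\lambda$ as a supremum of its tangent lines: for $u\in[0,D^*]$, $\lambda(u)=\sup_{u_0}\bigl\{\lambda(u_0)+\lambda'(u_0)(u-u_0)\bigr\}$. Multiplying by $\alpha\ge 0$ and setting $u=\alpha D/r$ gives
\[
\alpha\lambda\!\left(\frac{\alpha D}{r}\right)=\sup_{u_0}\left\{\alpha\bigl(\lambda(u_0)-u_0\lambda'(u_0)\bigr)+D\lambda'(u_0)\cdot\frac{\alpha^{2}}{r}\right\}.
\]
Each bracketed expression is linear in $\alpha$ plus a nonnegative multiple of $\alpha^{2}/r$, hence convex by the Lemma; a supremum of convex functions is convex. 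The second summand of $F$ is handled identically using $1-\alpha\ge 0$, so $F|_C$ is convex.

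For (ii), the diagonal lies in the interior of $C$ (both internal dilution rates equal $D<D^*$) and $F(\alpha,\alpha)=\lambda(D)$ for every $\alpha$. A direct computation at $(\alpha,\alpha)$ gives $F_\alpha=F_r=0$: differentiating the two summands produces contributions $\pm T^{in}(D)$ and $\mp D\lambda'(D)$ that cancel, thanks to the involution $F(\alpha,r)=F(1-\alpha,1-r)$. A critical point of a convex function is a global minimizer, so $\min_C F=\lambda(D)$, attained along the entire diagonal.

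For (iii), in $T_1$ the first summand of $F$ is frozen at $\alpha S^{in}$, while $\partial_r$ of the second summand equals $(1-\alpha)^{2}D\lambda'/(1-r)^{2}>0$; so for each $\alpha$ the minimum over $\overline{T_1}$ is attained at $r=0$. The involution $F(\alpha,r)=F(1-\alpha,1-r)$ reduces $\overline{T_2}$ to the same analysis, with the minimum on $r=1$. On the edge $r=0$ one has $G(\alpha):=F(\alpha,0)=\alpha S^{in}+(1-\alpha)\lambda((1-\alpha)D)$, so $G'(\alpha)=S^{in}-T^{in}((1-\alpha)D)$, where $T^{in}$ is nondecreasing on $[0,D^*)$ since its derivative equals $2\lambda'+D\lambda''\ge 0$. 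As $\alpha$ ranges over $[0,1]$, $T^{in}((1-\alpha)D)$ decreases from $T^{in}(D)$ to $0$. Hence $G'\ge 0$ throughout $[0,1]$ iff $S^{in}\ge T^{in}(D)$, in which case $\min G=G(0)=\lambda(D)$; otherwise $G'(0)<0$ and $G$ attains a value strictly below $\lambda(D)$. Combined with (ii), this yields the claimed dichotomy. The crux is (i): the tangent-line representation is the right way to couple the quadratic $\alpha^{2}/r$ (handled by the Lemma) to the general convex $\lambda$; the remainder is routine calculus together with the structural identity $F(\alpha,\alpha)\equiv\lambda(D)$.
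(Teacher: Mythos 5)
Your proof is correct, and its key step (i) takes a genuinely different route from the paper. The paper proves convexity of each summand $L(\alpha,r)=\alpha\lambda(\alpha D/r)$ by a direct two-point Jensen computation: it writes $\mu L(\alpha_1,r_1)+(1-\mu)L(\alpha_2,r_2)$ as $(\mu\alpha_1+(1-\mu)\alpha_2)$ times a convex combination of values of $\lambda$ with renormalized weights, pulls that combination inside $\lambda$ by convexity, and then invokes the Lemma together with the monotonicity of $\lambda$ to replace the resulting argument by $\delta$ evaluated at the convex combination of the two points. Your supporting-line representation reaches the same conclusion by exhibiting $L$ as a pointwise supremum of functions of the form (affine in $\alpha$) plus (nonnegative constant)$\cdot\alpha^{2}/r$, each convex by the Lemma; this avoids the weight-juggling entirely and makes transparent exactly where convexity and monotonicity of $\lambda$ enter (the latter via $\lambda'\geq 0$, which makes the coefficient of $\alpha^{2}/r$ nonnegative). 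Parts (ii) and (iii) match the paper in substance: the paper likewise shows the diagonal consists of critical points of the convex $F|_C$; on $T_1$ it rules out interior critical points and compares values on the three boundary pieces rather than using your (equivalent) monotonicity of $F$ in $r$ along slices; and on the edge $r=0$ it argues via strict convexity of $\alpha\mapsto F(\alpha,0)$ and the signs of $F_\alpha(0,0)=S^{in}-T^{in}(D)$ and $F_\alpha(1,0)=S^{in}$, which is the same dichotomy you obtain from the monotonicity of $T^{in}$. Two harmless slips: the derivative of $T^{in}$ at $u$ is $2\lambda'(u)+u\lambda''(u)$, not $2\lambda'+D\lambda''$, and the supremum in your tangent-line representation should be taken over $u_0\in[0,D^*]$ so that it reproduces $\lambda$ on $C$; neither affects the argument.
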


\begin{proof}
Let us first show that $F|_C$ is convex. 
We consider the functions $L$ and $R$ defined on $C$:
\begin{equation}
L(\alpha,r)=\alpha\lambda\left(\frac{\alpha}{r}D\right) \ , \; 
R(\alpha,r)=(1-\alpha)\lambda\left(\frac{1-\alpha}{1-r}D\right)
\end{equation}
and show that $L$ and $R$ are convex on $C$.\\

Let $(\alpha_{1},r_{1})$, $(\alpha_{2},r_{2})$ belong to $C$ and $\mu$ be a number in $[0,1]$. One has
\begin{eqnarray*}
\Gamma & = & \mu L(\alpha_{1},r_{1})+(1-\mu)L(\alpha_{2},r_{2})\\
& = & \mu\alpha_{1}\lambda\left(\frac{\alpha_{1}}{r_{1}}D\right)+
(1-\mu)\alpha_{2}\lambda\left(\frac{\alpha_{2}}{r_{2}}D\right)\\
& = & (\mu\alpha_{1}+(1-\mu)\alpha_{2})\left[\frac{\mu\alpha_{1}}{\mu\alpha_{1}+(1-\mu)\alpha_{2}}\lambda\left(\frac{\alpha_{1}}{r_{1}}D\right)\right.\\
& & \hspace{30mm}\left.+
\frac{(1-\mu)\alpha_{2}}{\mu\alpha_{1}+(1-\mu)\alpha_{2}}\lambda\left(\frac{\alpha_{2}}{r_{2}}D\right)\right]
\end{eqnarray*}
By convexity of the function $\lambda$, it follows
\begin{eqnarray*}
\Gamma &\geq & (\mu\alpha_{1}+(1-\mu)\alpha_{2})\lambda\left(
\frac{\mu\alpha_{1}^{2}/r_{1}+(1-\mu)\alpha_{2}^{2}/r_{2}}{\mu\alpha_{1}+(1-\mu)\alpha_{2}}D\right)\\
& = & (\mu\alpha_{1}+(1-\mu)\alpha_{2})\lambda\left(
\frac{\mu\delta(\alpha_{1},r_{1})+(1-\mu)\delta(\alpha_{2},r_{2})}{\mu\alpha_{1}+(1-\mu)\alpha_{2}}D\right)
\end{eqnarray*}
The function $\lambda$ being increasing and $\delta$ being convex (see the former Lemma), one obtains
\begin{eqnarray*}
\Gamma & \geq & (\mu\alpha_{1}+(1-\mu)\alpha_{2}).\\
       &      & \lambda\left(
\frac{\delta(\mu\alpha_{1}+(1-\mu)\alpha_{2},\mu r_{1}+(1-\mu)r_{2})}{\mu\alpha_{1}+(1-\mu)\alpha_{2}}D\right)\\
& = & (\mu\alpha_{1}+(1-\mu)\alpha_{2})\lambda\left(
\frac{\mu\alpha_{1}+(1-\mu)\alpha_{2}}{\mu r_{1}+(1-\mu)r_{2}}D\right)\\
& = & L(\mu\alpha_{1}+(1-\mu)\alpha_{2},\mu r_{1}+(1-\mu)r_{2})
\end{eqnarray*}
Similarily, it can be shown that $R$  is convex on $C$. The function $F$ being the sum of the two functions $L$ and $R$, one concludes that $F$ is also convex on $C$.\\

For $(\alpha,r)\in \interior(C)$, we have that
\begin{eqnarray}
F_\alpha&=& \lambda \left(\frac{\alpha}{r}D \right)+\frac{\alpha D}{r}\lambda'\left(\frac{\alpha}{r}D \right)-\lambda \left(\frac{1-\alpha}{1-r}D \right)\\
 & & \hspace{30mm} -\frac{(1-\alpha)D}{1-r}\lambda' \left(\frac{1-\alpha}{1-r}D \right) \label{F-alfa}\\
F_r&=&-\frac{\alpha^2D}{r^2}\lambda' \left(\frac{\alpha}{r}D \right)+\frac{(1-\alpha)^2D}{(1-r)^2}\lambda'\left(\frac{1-\alpha}{1-r}D \right)\label{F-r}
\end{eqnarray}
We note from $(\ref{F-alfa})$ and $(\ref{F-r})$ that every point on the diagonal where $\alpha=r$ in $\interior(C)$ is a critical point, i.e.:
$$
F_\alpha|_{\alpha=r}=F_r|_{\alpha=r}=0,
$$
and thus, since $F$ is convex in $C$, it follows that
\begin{equation}\label{minimum}
\min_{C}F=\lambda(D)=F(\alpha,\alpha),\;\; \forall \alpha \in [0,1]
\end{equation}
Since $[0,1]^2$ is the disjoint union of $C$, $T_1$ and $T_2$, the proof of the Proposition will be complete if we study the minimisation of $F$ on $T_1$ and $T_2$. Notice that $T_2=\{(\alpha,r)\in [0,1]^2\;|\; (1-\alpha,1-r)\in T_1\}$ and $F(\alpha,r)=F(1-\alpha,1-r)$. Consequently, one has $\min_{T_1}F=\min_{T_2}F$, and we can study the restriction of $F$ to $T_1$ only.
When $(\alpha,r)\in T_1$, then
$$
F(\alpha,r)=\alpha S^{in}+(1-\alpha)\lambda \left( \frac{1-\alpha}{1-r}D \right)
$$ 
Note that $F$ has no critical points in $\interior(T_1)$ because
$$
F_r=\frac{(1-\alpha)^2D}{(1-r)^2}\lambda' \left(\frac{1-\alpha}{1-r}D \right)>0,
$$
and thus the extrema of $F|_{T_1}$ are necessarily located on the boundary of $T_1$. We calculate the values of $F$ on the various 
parts of the boundary of $T_1$, and bound them below:
\begin{eqnarray*}
F(\alpha,0)&=&\alpha S^{in}+(1-\alpha)\lambda ((1-\alpha)D)\\
F(1,r)&=&S^{in}\geq \lambda(D)\\
F(\alpha,\alpha\frac{D}{D^*})&=&\alpha S^{in}+(1-\alpha)\lambda \left(\frac{1-\alpha}{1-\alpha\frac{D}{D^*}}D \right)\\
 & \geq &  \alpha S^{in}+(1-\alpha)\lambda((1-\alpha)D)=F(\alpha,0)
\end{eqnarray*}
where we used that $\lambda(.)$ is non-decreasing. One has
\begin{eqnarray*}
F_\alpha(\alpha,0) & = & S^{in}-\lambda((1-\alpha)D)-D\lambda^{\prime}((1-\alpha)D)(1-\alpha)\\
F_{\alpha\alpha}(\alpha,0) & = & 2D\lambda^{\prime}((1-\alpha)D)+D^{2}\lambda^{\prime\prime}((1-\alpha)D)(1-\alpha)
\end{eqnarray*}
and one concludes that $\alpha \mapsto F(\alpha,0)$ is strictly convex, $\lambda$ being a convex increasing function on the domain $[0,D]$. Furhermore, one has 
\begin{eqnarray*}
F_\alpha(0,0) & = & S^{in}-T^{in}(D)\\
F_\alpha(1,0) & = & S^{in}
\end{eqnarray*}
Consequently, the unique minimum of $F(\alpha,0)$ over $\alpha$ is reached at $\alpha=0$ exactly when $F_\alpha(0,0)\geq 0$, that is to write $S^{in}\geq T^{in}(D)$. Finally, notice that one has 
$F(0,0)=\lambda(D)$ and the proof of the Proposition now follows from $(\ref{minimum})$. 
\end{proof}

\begin{remark} 
\label{rem1}
When $S_{in}<T^{in}(D)$ the best configuration is obtained for the by-pass of one tank with $\alpha$ different than $0$ or $1$. For the Monod function, the threshold on $S^{in}$ has the following expression
$$ 
T^{in}(D)=\frac{Da}{m-D}\left(1+\frac{m}{m-D}\right) \ .
$$
\end{remark}

\section{Two species}
\label{section2}

We consider two species with respective per capita growth rate functions $f_1$  and $f_2$, satisfying the conditions 
of a single species we imposed in the previous section. We assume that there is a unique ${\bar S}\in (0,S^{in})$ such that
\begin{eqnarray*}
f_1(S)&>&f_2(S)\textrm{ if } S\in (0,{\bar S})\\
f_2(S)&>&f_1(S)\textrm{ if } S \in ({\bar S},+\infty)
\end{eqnarray*}
and we let
$$
{\bar D}=f_1({\bar S})=f_2({\bar S}),\;\; D_1^*=f_1(S^{in}),\;\; D_2^*=f_2(S^{in})
$$
We assume as before that
$$
D<\min (D_1^*,D_2^*)
$$

\begin{definition}
We say that a configuration $(\alpha,r)\in (0,1)^2$ corresponds to transgressive overyielding if
\begin{equation}\label{overyield}
\begin{array}{lll}
G(\alpha,r) & := & \displaystyle \alpha \lambda_1 \left(\frac{\alpha}{r}D \right )+(1-\alpha)\lambda_2 \left(\frac{1-\alpha}{1-r}D \right)\\
& < & \min (F_1(\alpha ,r), F_2 (\alpha,r)),
\end{array}
\end{equation}
where $\lambda_i$ is defined as in $(\ref{lambda})$ but with $f=f_i$ and $D^*=D^*_i$, 
and $F_i$ is defined as in $(\ref{yield})$ but with 
$\lambda=\lambda_i$.
\end{definition}

Let us first consider the case where $\alpha=r$. Then
\begin{eqnarray*}
G(\alpha,\alpha)&=&\alpha \lambda_1(D)+(1-\alpha)\lambda_2(D)\\
F_1(\alpha,\alpha)&=&\alpha \lambda_1(D)+(1-\alpha)\lambda_1(D)=\lambda_1(D)\\
F_2(\alpha,\alpha)&=&\alpha \lambda_2(D)+(1-\alpha)\lambda_2(D)=\lambda_2(D)
\end{eqnarray*}

Consequently, since any convex combination of two non-negative numbers cannot be smaller than the smallest of these two numbers, 
it follows that
\begin{equation}\label{special}
\begin{array}{lll}
G(\alpha,\alpha) & = & \alpha \lambda_1(D)+(1-\alpha)\lambda_2(D)\\
& \geq & \min(\lambda_1(D),\lambda_2(D))
 = \min(F_1(\alpha,\alpha),F_2(\alpha,\alpha)), 
\end{array}
\end{equation}
which implies that no configuration with $\alpha=r$ can correspond to transgressive overyielding. Moreover, if $\lambda_1(D)\neq \lambda_2(D)$, then the inequality in $(\ref{special})$ is strict for all $\alpha \in (0,1)$. By continuity of $G,F_1$ and $F_2$ this inequality remains strict 
for all configurations $(\alpha,r)$ near such configurations where $\alpha=r$ and $\alpha\in (0,1)$, implying that we will not find 
configurations corresponding to transgressive overyielding nearby. On the other hand, if $\lambda_1(D)=\lambda_2(D)={\bar S}$, or equivalently if 
$D={\bar D}$, then the inequality in $(\ref{special})$ reduces to an equality for all $\alpha \in (0,1)$. In this case we show that there always exist 
configurations nearby corresponding to transgressive overyielding.

The two next Propositions show that there always exist configurations of two parallel chemostats with only one
of the species in each reactor volume that perform better than in the case in which for the same
configuration, only one of the species is present in both reactors, regardless of the selected
species.

\begin{proposition}
\label{overyield-config}
Let $D={\bar D}$ and fix $\alpha \in (0,1)$. Then there always exist configurations $({\tilde \alpha},{\tilde r})$ near $(\alpha,\alpha)$ that 
correspond to transgressive overyielding.
\end{proposition}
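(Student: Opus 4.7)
The plan is to exploit a clean algebraic identity that reduces the overyielding inequality to a pointwise comparison between $\lambda_1$ and $\lambda_2$. A direct calculation gives
\begin{eqnarray*}
G(\alpha,r) - F_1(\alpha,r) & = & (1-\alpha)\left[\lambda_2\left(\tfrac{1-\alpha}{1-r}D\right) - \lambda_1\left(\tfrac{1-\alpha}{1-r}D\right)\right],\\
G(\alpha,r) - F_2(\alpha,r) & = & \alpha\left[\lambda_1\left(\tfrac{\alpha}{r}D\right) - \lambda_2\left(\tfrac{\alpha}{r}D\right)\right],
\end{eqnarray*}
so transgressive overyielding holds at $(\alpha,r)$ precisely when the argument on the first line places $\lambda_2$ below $\lambda_1$ and the argument on the second line places $\lambda_1$ below $\lambda_2$. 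Evaluating at the diagonal point $(\alpha,\alpha)$ with $D=\bar D$, both arguments equal $\bar D$ and $\lambda_1(\bar D)=\lambda_2(\bar D)=\bar S$, so both differences vanish, which recovers the equality already noted in the text.

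Next I would determine the sign of $\lambda_1-\lambda_2$ on either side of $\bar D$. Since $f_1,f_2$ are increasing with $f_1(\bar S)=f_2(\bar S)=\bar D$, and since the standing hypothesis $D<\min(D_1^*,D_2^*)$ places an entire neighborhood of $\bar D$ inside the range where $\lambda_i = f_i^{-1}$, the assumed ordering $f_1>f_2$ on $(0,\bar S)$ and $f_2>f_1$ on $(\bar S,+\infty)$ inverts, by a one-line monotonicity argument, to
$$\lambda_1(y)>\lambda_2(y) \text{ for } y>\bar D, \qquad \lambda_1(y)<\lambda_2(y) \text{ for } y<\bar D.$$
Consequently the two sign conditions demanded above both translate into the single geometric condition $\frac{1-\tilde\alpha}{1-\tilde r}>1$ and $\frac{\tilde\alpha}{\tilde r}<1$, each of which is equivalent to $\tilde r>\tilde\alpha$.

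To finish, I would simply set $\tilde\alpha:=\alpha$ and $\tilde r:=\alpha+\varepsilon$ for $\varepsilon>0$ sufficiently small. For such $\varepsilon$, the point $(\tilde\alpha,\tilde r)$ is as close to $(\alpha,\alpha)$ as desired, remains inside $(0,1)^2$, keeps the two $\lambda_i$-arguments inside the invertibility range, and strictly satisfies $\tilde r>\tilde\alpha$; plugging into the identities above yields $G(\tilde\alpha,\tilde r)<F_1(\tilde\alpha,\tilde r)$ and $G(\tilde\alpha,\tilde r)<F_2(\tilde\alpha,\tilde r)$ simultaneously, which is exactly condition (\ref{overyield}). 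No substantial obstacle is anticipated: the whole argument reduces to the algebraic identity for $G-F_i$ together with the elementary observation that inverting monotone functions reverses pointwise inequalities; the fact that the diagonal is a critical manifold for each $F_i$, as already established in the one-species Proposition, is what allows a first-order perturbation in $r-\alpha$ to produce the strict inequality.
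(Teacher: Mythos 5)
Your proposal is correct and follows essentially the same route as the paper: both arguments perturb off the diagonal so that the first reactor's dilution rate drops below $\bar D$ while the second's rises above it, and then exploit that the crossing of $f_1$ and $f_2$ at $\bar S$ inverts to $\lambda_1<\lambda_2$ below $\bar D$ and $\lambda_1>\lambda_2$ above it (you phrase this via the explicit differences $G-F_i$, the paper via a chain of ordered values combined with identical convex weights, but the content is identical). The only cosmetic slip is your closing remark attributing the strict inequality to the diagonal being a critical manifold of the $F_i$; that fact plays no role here --- the strictness comes directly from the strict sign change of $\lambda_1-\lambda_2$ across $\bar D$.
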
 
\begin{proof}
We start with the simple observation that for any $({\tilde \alpha},{\tilde r})\in (0,1)^2$:
$$
\frac{{\tilde \alpha}}{{\tilde r}}\lesseqqgtr1 \Longleftrightarrow \frac{1-{\tilde \alpha}}{1-{\tilde r}}\gtreqqless1
$$
Therefore, for fixed $\alpha\in (0,1)$, we can always find $({\tilde \alpha},{\tilde r})\in (0,1)^2$ near $(\alpha,\alpha)$ such that
$$
\frac{{\tilde \alpha}}{{\tilde r}}{\bar D}<{\bar D}<\frac{1-{\tilde \alpha}}{1-{\tilde r}}{\bar D}
$$
Moreover, since $D={\bar D}< \min (D_1^*,D_2^*)$, we may assume w.l.o.g. that 
$$
\frac{1-{\tilde \alpha}}{1-{\tilde r}}{\bar D}<\min(D_1^*,D_2^*)
$$
The properties of $f_1$ and $f_2$ imply that:
\begin{eqnarray*}
0 & < & \lambda_1 \left(\frac{{\tilde \alpha}}{{\tilde r}}{\bar D} \right)<\lambda_2 \left(\frac{{\tilde \alpha}}{{\tilde r}}{\bar D} \right)<{\bar S}\\
& < & \lambda_2\left(\frac{1-{\tilde \alpha}}{1-{\tilde r}}{\bar D} \right)<\lambda_1\left(\frac{1-{\tilde \alpha}}{1-{\tilde r}}{\bar D}\right)
<S^{in}
\end{eqnarray*}
and thus in particular that:
\begin{eqnarray*}
\label{order}
0 & < & \lambda_1 \left(\frac{{\tilde \alpha}}{{\tilde r}}{\bar D} \right)<\lambda_2 \left(\frac{{\tilde \alpha}}{{\tilde r}}{\bar D} \right)\\
& < & \lambda_2\left(\frac{1-{\tilde \alpha}}{1-{\tilde r}}{\bar D} \right)<\lambda_1\left(\frac{1-{\tilde \alpha}}{1-{\tilde r}}{\bar D} \right)
\end{eqnarray*}
There holds that
\begin{eqnarray*}
G({\tilde \alpha},{\tilde r})&=&{\tilde \alpha} \lambda_1\left(\frac{{\tilde \alpha}}{{\tilde r}}{\bar D}\right)+(1-{\tilde \alpha})\lambda_2\left(\frac{1-{\tilde \alpha}}{1-{\tilde r}}{\bar D} \right)\\
F_1({\tilde \alpha},{\tilde r})&=&{\tilde \alpha} \lambda_1\left(\frac{{\tilde \alpha}}{{\tilde r}}{\bar D} \right)+(1-{\tilde \alpha})\lambda_1\left(\frac{1-{\tilde \alpha}}{1-{\tilde r}}{\bar D} \right)\\
F_2({\tilde \alpha},{\tilde r})&=&{\tilde \alpha} \lambda_2\left(\frac{{\tilde \alpha}}{{\tilde r}}{\bar D} \right)+(1-{\tilde \alpha})\lambda_2\left(\frac{1-{\tilde \alpha}}{1-{\tilde r}}{\bar D} \right)
\end{eqnarray*}
Noting that each of $G({\tilde \alpha},{\tilde r}),F_1({\tilde \alpha},{\tilde r})$ and $F_2({\tilde \alpha},{\tilde r})$ is a convex combination 
with the same weights ${\tilde \alpha}$ and $1-{\tilde \alpha}$
of a pair of distinct positive numbers from a collection of four positive numbers that are ordered as indicated by $(\ref{order})$, it follows that:
$$
G({\tilde \alpha},{\tilde r})<\min \left(F_1({\tilde \alpha},{\tilde r}),F_2({\tilde \alpha},{\tilde r}) \right),
$$
and thus the configuration $({\tilde \alpha},{\tilde r})$ corresponds to transgressive overyielding.
\end{proof}

The case where $D\neq {\bar D}$, can be handled similarly:
\begin{proposition}
Let $D\neq {\bar D}$. Then there exist $(\alpha,r)\in (0,1)^2$ such that
\begin{equation}\label{choice}
\frac{\alpha}{r}D<{\bar D}<\frac{1-\alpha}{1-r}D<\min(D_1^*,D_2^*)
\end{equation}
and this configuration $(\alpha,r)$ corresponds to transgressive overyielding.
\end{proposition}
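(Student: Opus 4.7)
The approach is a direct analog of the proof of Proposition \ref{overyield-config}: first construct a configuration $(\alpha,r) \in (0,1)^2$ that satisfies the chain $(\ref{choice})$; second, derive the four-term ordering of $\lambda_i$-values; third, conclude by the same convex-combination argument used in the $D={\bar D}$ case. Once step one is done, the remaining two steps are essentially transcribed from the preceding proposition.

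For step one, set $c := {\bar D}/D$. Because ${\bar S} < S^{in}$ and each $f_i$ is strictly increasing, one has ${\bar D} = f_i({\bar S}) < f_i(S^{in}) = D_i^*$, so ${\bar D} < \min(D_1^*, D_2^*)$. Together with $D < \min(D_1^*,D_2^*)$, this guarantees that the interval $(\max(1,c), \min(D_1^*,D_2^*)/D)$ is non-empty. Pick $k$ in this interval, impose $(1-\alpha)/(1-r) = k$, i.e.\ $\alpha = 1 - k(1-r)$, and choose $r$. Since $k > 1$, the condition $\alpha \in (0,1)$ becomes $r \in ((k-1)/k,\, 1)$, and the condition $\alpha/r < c$ rearranges to $r < (k-1)/(k-c)$, which cuts out a non-empty sub-interval because $c > 0$. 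Any $r$ in this sub-interval yields $(\alpha,r) \in (0,1)^2$ satisfying all three inequalities of $(\ref{choice})$. This parametrization works uniformly in the two regimes $D < {\bar D}$ (where $c > 1$) and $D > {\bar D}$ (where $c < 1$), since in both cases the requirement $k > 1$ is forced by the analysis of $\alpha/r < c$.

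For step two, the inequality $\alpha D / r < {\bar D}$ places both $\lambda_i(\alpha D/r)$ in $(0,{\bar S})$ on the increasing branches of the $\lambda_i$; since $f_1 > f_2$ on $(0,{\bar S})$ and $\lambda_i = f_i^{-1}$ there, this gives $\lambda_1(\alpha D/r) < \lambda_2(\alpha D/r)$. Similarly, ${\bar D} < (1-\alpha)D/(1-r) < \min(D_1^*,D_2^*)$ places both $\lambda_i((1-\alpha)D/(1-r))$ in $({\bar S}, S^{in})$, where $f_2 > f_1$ forces $\lambda_2((1-\alpha)D/(1-r)) < \lambda_1((1-\alpha)D/(1-r))$. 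Combining with $\lambda_i(\alpha D/r) < {\bar S} < \lambda_i((1-\alpha)D/(1-r))$ reproduces exactly the ordering $(\ref{order})$ of the preceding proposition.

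For step three, $G(\alpha,r)$, $F_1(\alpha,r)$ and $F_2(\alpha,r)$ are each convex combinations with weights $\alpha$ and $1-\alpha$ of pairs drawn from the four ordered quantities above: $G$ pairs the smallest with the second-largest, $F_1$ pairs the smallest with the largest, and $F_2$ pairs the second-smallest with the second-largest. This immediately gives $G(\alpha,r) < F_1(\alpha,r)$ and $G(\alpha,r) < F_2(\alpha,r)$, i.e.\ transgressive over-yielding. The main obstacle is step one, i.e.\ verifying that the linear-fractional inequalities $\alpha/r < c < (1-\alpha)/(1-r)$ admit a joint solution inside $(0,1)^2$ respecting the upper bound $\min(D_1^*,D_2^*)/D$; this is only elementary algebra, but the two sub-cases $c > 1$ and $c < 1$ must be tracked to confirm that the constructed $(\alpha,r)$ stays in the unit square.
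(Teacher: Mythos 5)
Your proposal is correct and follows essentially the same route as the paper: construct $(\alpha,r)$ satisfying the chain $(\ref{choice})$, deduce the four-term ordering $(\ref{order})$, and conclude by the same convex-combination comparison as in Proposition \ref{overyield-config}. The only difference is that your step one gives an explicit parametrization via $k=(1-\alpha)/(1-r)$ where the paper merely asserts that two half-planes intersect inside $(0,1)^2$; your version is in fact the more careful one, since it explicitly secures the middle inequality ${\bar D}<\frac{1-\alpha}{1-r}D$, which the paper's one-line geometric remark glosses over.
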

\begin{proof}
Using the fact that ${\bar D},D<\min(D_1^*,D_2^*)$, 
it is not hard to show that there always exist $(\alpha,r)\in (0,1)^2$ such that $(\ref{choice})$ holds. Indeed, the half spaces
$\{(\alpha,r)|\alpha/r<D/{\bar D}\}$ and $\{(\alpha,r)|(1-\alpha)/(1-r)<\min(D_1^*,D_2^*)/D\}$ are easily seen to intersect in the open unit 
square $(0,1)^2$.
From this follows that $(\ref{order})$ holds with $({\tilde \alpha},{\tilde r})$ replaced by $(\alpha,r)$, and then the rest of the 
proof of the previous Proposition can be carried out with $({\tilde \alpha},{\tilde r})$ replaced by $(\alpha,r)$. In conclusion, 
the configuration $(\alpha,r)$ corresponds to transgressive overyielding.
\end{proof}

\section{Numerical examples}
We consider two species whose graphs of growth functions $f_{i}$ ($i=1, 2$) intersect away from zero~:
$$
f_{1}(s)=\frac{2s}{3+s} \, , \qquad f_{2}(s)=\frac{s}{0.3+s}
$$
One can easily check that $f_{1}(S^{\star})=f_{2}(S^{\star})$ for $S^{\star}=2.4$. For the simulations, we have chosen $D=0.9$ that is slightly above $f_{1}(S^{\star})=f_{2}(S^{\star})= 0.88$ (see Figure \ref{fig1}).
\begin{figure}[h]%[htbp]
\begin{center}
\includegraphics[scale=0.6]{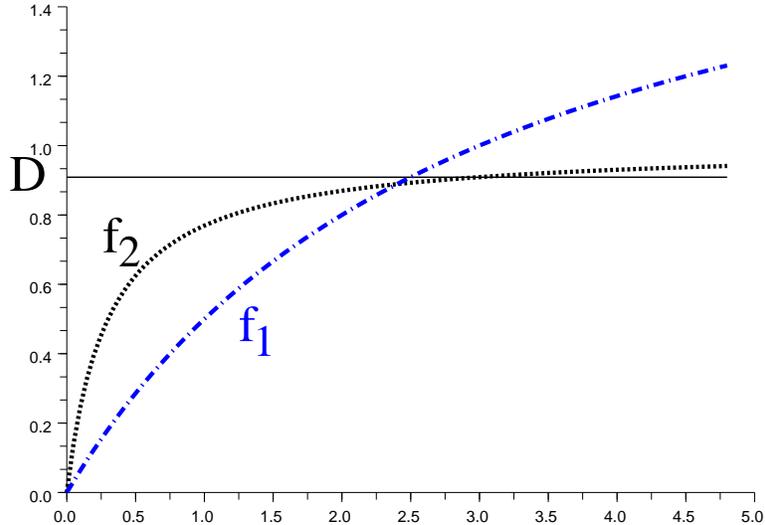}
\caption{Growth curves and dilution rate $D$} \label{fig1}
\end{center}
\end{figure}
For this value of the dilution rate $D$, the break-even concentrations of the two species are
$$ 
\lambda_{1}(D)= 2.499 \, , \qquad \lambda_{2}(D)= 2.993
$$ 
The thresholds given by formula (\ref{threshold-Sin}) are
$$ 
T_{1}^{in}(D)= 7.08 \ , \; T_{2}^{in}(D)= 35.84
$$
We have studied numerically three values of $S^{in}$, larger than both thresholds, in between or lower. The best values for the bypass configurations are reported on the following table
\begin{table}[h]%[htbp]
\begin{center} 
\begin{tabular}{c|c|c|c|}
$S^{in}$                       & 40 & 10 & 5\\
\hline
$\min_{\alpha}F_{1}(\alpha,0)$ & $2.50 \, (=\lambda_{1}(D))$ & $2.50 \, (=\lambda_{1}(D))$ & 2.35\\
$\min_{\alpha}F_{2}(\alpha,0)$ & $2.99 \, (=\lambda_{2}(D))$ & 2.21 & 1.61\\[2mm]
\end{tabular}
\caption{\label{table}Best values of by-pass configurations.}
\end{center}
\end{table}
For $S^{in}=40$, the best configuration is obtained for $\alpha=0.41$ and $r=0.36$ that achieves the minimum of $G$ equal to $0.234$, which represents a gain of $12\%$ compared to having the best species (the first one) in a single tank.
Accordingly to the Table \ref{table}, the best configurations for $S^{in}=10$ and $S^{in}=5$ are obtained for the bypass of the first tank, using the second species only in the second tank (although this species has the worst break-even concentration). Nevertheless, we show on Figures \ref{fig2} 
and \ref{fig3} the benefit of the parallel configuration with non-null volumes for different distributions of the volumes given by the parameter $r$.

One can check that in any case the minimum of $G(\cdot,r)$ over $\alpha$ is lower than $\min(\lambda_{1}(D),\lambda_{2}(D))$ and is reached for a value of $\alpha$ close to $r$.

\begin{figure}[htbp]
\begin{center}
\includegraphics[scale=0.9]{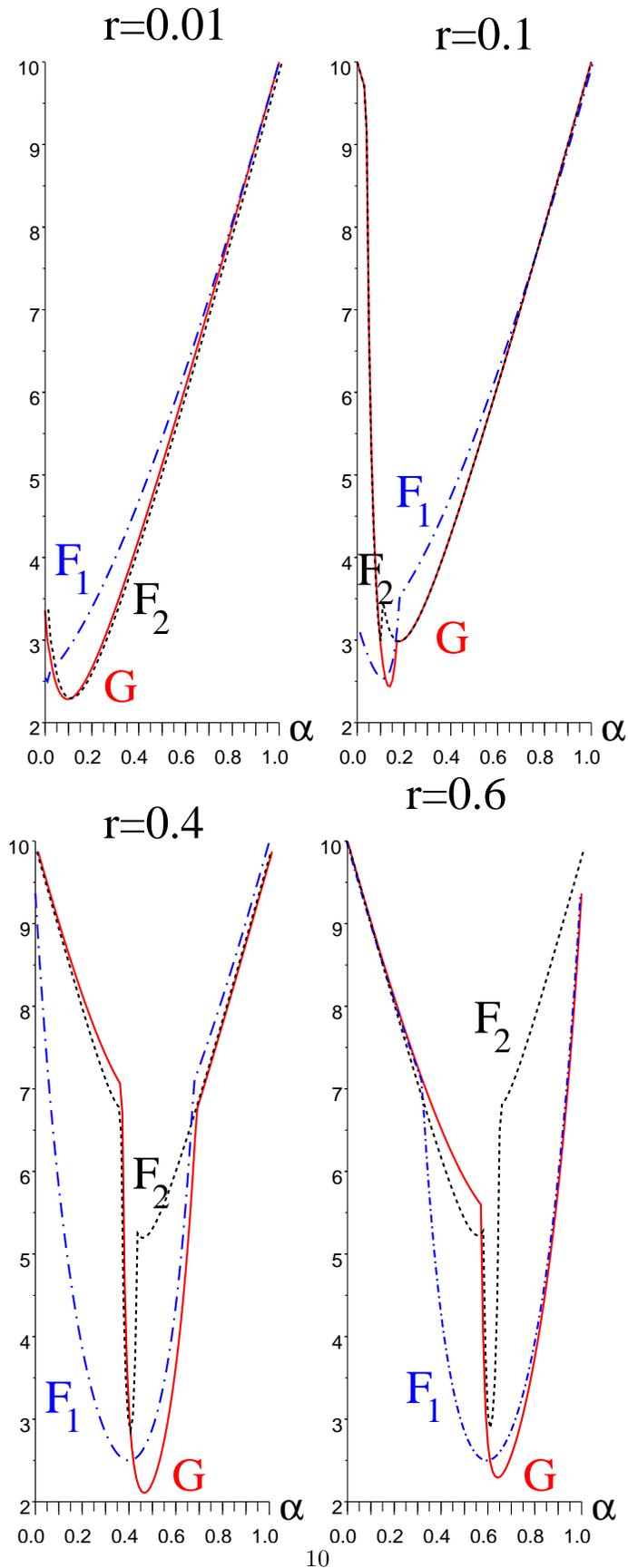}
\caption{Graphs of $F_{i}(\cdot,r)$ and  $G(\cdot,r)$ for $S^{in}=10$ \label{fig2}}
\end{center}
\end{figure} 
\begin{figure}[htbp]
\begin{center}
\includegraphics[scale=0.9]{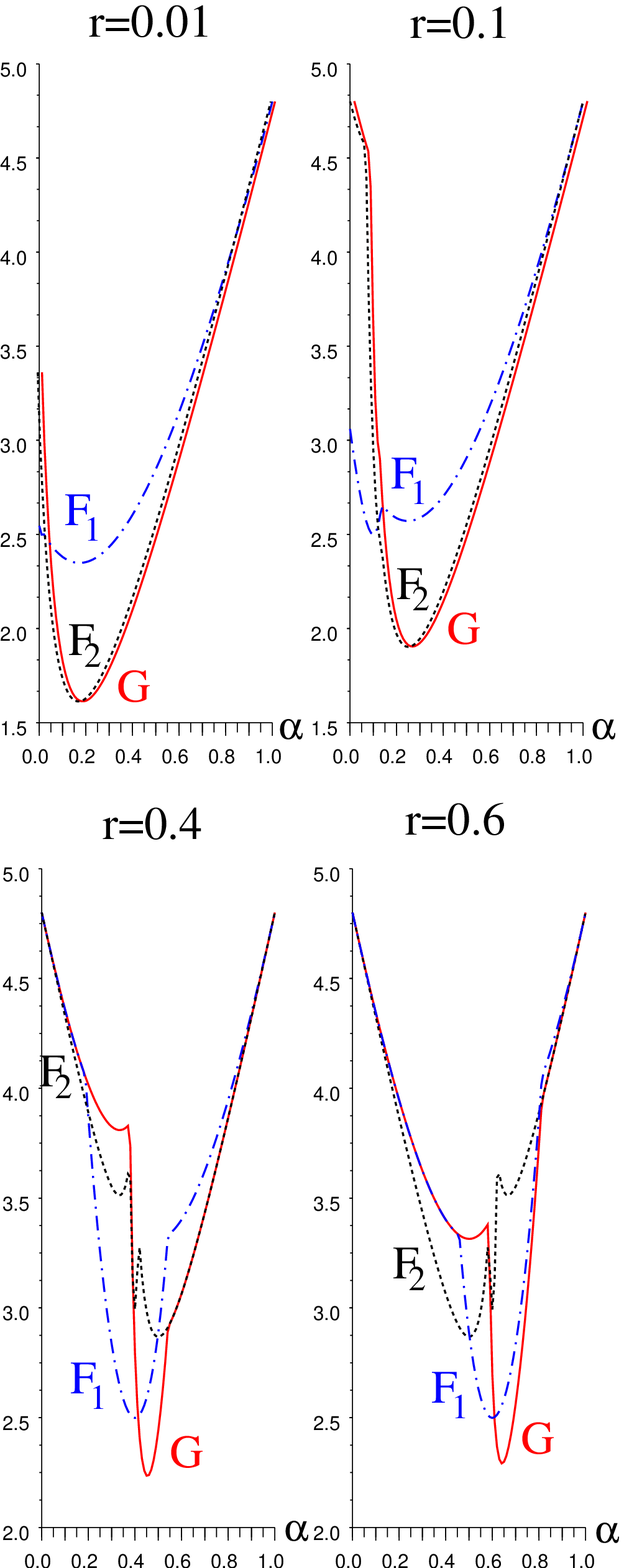}
\caption{Graphs of $F_{i}(\cdot,r)$ and  $G(\cdot,r)$ for $S^{in}=5$ \label{fig3}}
\end{center}
\end{figure}

\section{Conclusion}

Our main contribution is to show that whatever are monotonic growh
rate functions of two species, whose graphs have an intersection away
from the origin, there always exist configurations of two parallel
chemostats with only one of the species in each reactor volume that
perform better than in the case in which for the same configuration,
only one of the species is present in both reactors, regardless of the
selected species. We beleive that this result is of interest for
potential applications in bio-industry. Further investigations could
be conducted to determine more precisely what gain could be
expected. Our simulations show that this gain could be significant.\\

\noindent  {\bf Acknowledgments} 
This research was initiated at INRA in Montpellier during a visit of DD and PDL there. They wish to express their gratitude for the opportunity.
PDL thanks VLAC (Vlaams Academisch Centrum) for hosting him as a research fellow during a sabbatical leave from the University of Florida. PDL also gratefully acknowledges support received from the University of Florida through a Faculty Enhancement Opportunity, and the Universit{\'e} Catholique de Louvain-la-Neuve for providing him with a visiting professorship in the fall of 2011. 

\bibliographystyle{plain}
\bibliography{overyield} 

\begin{thebibliography}{10}

\bibitem{GBBT96}
C.~de~Gooijer, W.~Bakker, H.~Beeftink, and J.~Tramper.
\newblock Bioreactors in series : an overview of design procedures and
  practical applications.
\newblock {\em Enzyme and Microbial Technology}, 18:202--219, 1996.

\bibitem{ERH06}
B.~Eriksson, A.~Rubach, and H.~Hillebrand.
\newblock Biotic habitat complexity controls species diversity and nutrient
  effects on net biomass production.
\newblock {\em Ecology}, 87:246--254, 2006.

\bibitem{HRG11}
I.~Haidar, A.~Rapaport, and F.~Gérard.
\newblock Effects of spatial structure and diffusion on the performances of the
  chemostat.
\newblock {\em Mathematical Biosciences and Engineering}, 8(4):953--971, 2011.

\bibitem{HD05}
J.~Harmand and D.~Dochain.
\newblock Towards a unified approach for the design of interconnected catalytic
  and auto-catalytic reactors.
\newblock {\em Computers and Chemical Engineering}, 30:70--82, 2005.

\bibitem{HRD04}
J.~Harmand, A.~Rapaport, and A.~Dram\'e.
\newblock Optimal design of 2 interconnected enzymatic reactors.
\newblock {\em Journal of Process Control}, 14(7):785--794, 2004.

\bibitem{HRT03}
J.~Harmand, A.~Rapaport, and A.~Trofino.
\newblock Optimal design of two interconnected bioreactors--some new results.
\newblock {\em AIChE Journal}, 49(6):1433--1450, 2003.

\bibitem{HR89}
G.~Hill and C.~Robinson.
\newblock Minimum tank volumes for cfst bioreactors in series.
\newblock {\em The Canadian Journal of Chemical Engineering}, 67:818--824,
  1989.

\bibitem{LBSJ10}
S.~Langenheder, M.~Bulling, M.~Solan, and J.~Prosser.
\newblock Bacterial biodiversity-ecosystem functioning relations are modified
  by environmental complexity.
\newblock {\em Plos One}, 5(5):e10834, 2010.

\bibitem{LT82}
K.~Luyben and J.~Tramper.
\newblock Optimal design for continuously stirred tank reactors in series using
  michaelis-menten kinetics.
\newblock {\em Biotechnology and Bioengineering}, 24:1217--1220, 1982.

\bibitem{NS06}
M.~Nelson and H.~Sidhu.
\newblock Evaluating the performance of a cascade of two bioreactors.
\newblock {\em Chemical Engineering Science}, 61:3159--3166, 2006.

\bibitem{RHM08}
A.~Rapaport, J.~Harmand, and F.~Mazenc.
\newblock Coexistence in the design of a series of two chemostats.
\newblock {\em Nonlinear Analysis: Real World Applications}, 9:1052--1067,
  2008.

\bibitem{RB09}
T.~Replansky and G.~Bell.
\newblock The relationship between environmental complexity, species diversity
  and productivity in a natural reconstructed yeast community.
\newblock {\em OIKOS}, 118:233--39, 2009.

\bibitem{SHSL08}
B.~Schmid, A.~Hector, P.~Saha, and M.~Loreau.
\newblock Biodiversity effects and transgressive overyielding.
\newblock {\em J Plant Ecol}, 1:95--102, 2008.

\bibitem{SW95}
H.~Smith and P.~Waltman.
\newblock {\em The theory of chemostat, dynamics of microbial competition,}.
\newblock Cambridge University Press -- Cambridge Studies in Mathematical
  Biology, 1995.

\bibitem{SF79}
G.~Stephanopoulos and A.~Fredrickson.
\newblock Effect of inhomogeneities on the coexistence of competing microbial
  populations.
\newblock {\em Biotechnology and Bioengineering}, 21:1491--1498, 1979.

\bibitem{TRKKB08}
J.~Tylianakis, T.~Rand, A.~Kahmen, A.~Klein, and N.~Buchmann.
\newblock Resource heterogeneity moderates the biodiversity-function
  relationship in real world ecosystems.
\newblock {\em PLoS Biol}, 6:e122, 2008.

\end{thebibliography}

\end{document}